\newtheorem{theorem}{Theorem}
\newtheorem{corollary}[theorem]{Corollary}
\newtheorem{lemma}[theorem]{Lemma}
\newtheorem{question}{Question}
\newtheorem{proposition}[theorem]{Proposition}
\begin{document}
\def\F{{\mathbb F}}
\title{ Algebraic approach to Rump's results on relations between  braces and pre-Lie algebras }

\author{Agata Smoktunowicz}
\date{}
\maketitle
\begin{abstract}

In 2014, Wolfgang  Rump showed that there exists a correspondence  between left nilpotent right $\mathbb R$-braces and pre-Lie algebras.
This correspondence, established  using a geometric approach related to flat affine manifolds and affine torsors, works locally. In this paper we explain  Rump's correspondence using only algebraic formulae. 
An algebraic interpretation  of the  correspondence works for fields of sufficiently large prime characteristic as well as for fields of characteristic zero. 

\end{abstract}

\section{Introduction}

In \cite{Rump}, Wolfgang Rump showed that there exists a correspondence  between left nilpotent right $\mathbb R$-braces and pre-Lie algebras.   Braces were introduced by Rump in 2005 to describe all involutive set-theoretic solutions of the Yang-Baxter equation. This approach subsequently found applications in several other research areas. 

  The  algebraic formula for Rump's  connection from strongly nilpotent braces to pre-Lie algebras is particularly simple and enables  rapid production of  examples of pre-Lie algebras from braces.

\begin{question}
 Is there a formula for a  passage from finite pre-Lie algebras to fnite $\mathbb F$-braces, where $F$ is a finite field?
\end{question}

 The algebraic formula for a passage from pre-Lie algebras to $\mathbb F$-braces in Rump's correspondence is not easy to use in examples, which raises the question:
\begin{question}
 Is there an easy  way  to obtain braces from pre-Lie algebras?
\end{question}

 In this paper all considered braces are left braces. Recall that if we change the multiplication in a left brace to the opposite multiplication we will obtain a right brace, and vice-versa. In  Rump's papers \cite{rump}, \cite{Rump},  a brace always means a right brace. In  \cite{Rump} Rump presented a correspondence between right  $\mathbb R$-braces and RSA-the right symmetric algebras. By taking the opposite multiplication in both the brace and the RSA it gives the correspondence between
 left-braces and LSA, left symmetric algebras, also called pre-Lie algebras.

 Notice that it is known that pre-Lie algebras are in correspondence with the {\' e}tale  affine representations of nilpotent Lie algebras \cite{B2}, and with Lie algebras with 1-cocycles \cite{Rump},\cite{Bai}, \cite{Burde}, they also appear in noncommutative geometry. 
Braces have also been linked to other
research areas, for example, in \cite{gateva}, Gateva-Ivanova showed that there is a correspondence between
braces and braided groups with an involutive braiding operator, whereas in \cite{DB}, Bachiller observed that
there is a connection between braces and Hopf-Galois extensions of abelian type (see
also the appendix to \cite{SVB} for some further results). In \cite{Rump, DB, Catino, gateva, SVB} braces and skew braces have been shown to be equivalent  to several concepts in group theory (1-cocycles, regular subgroups, matched pairs of groups). Moreover, two-sided braces are exactly
the Jacobson radical rings \cite{rump}, \cite{cjo}. In \cite{doikou}, applications of braces in quantum integrable
systems were investigated, and in \cite{Agatka1} R-matrices constructed from braces were studied.
Solutions of the reflection equation related to braces have also been investigated by several
authors.

In this paper we use purely algebraic methods to look closely at Rump's correspondence between braces and pre-Lie algebras and give algebraic formulas for this correspondence.

\section{Background information}

 A pre-Lie algebra $A$ is a vector space with a binary operation $(x, y) \rightarrow  xy$
satisfying
\[(xy)z -x(yz) = (yx)z - y(xz),\]

 for every $x,y,z\in A$. We say that a pre-Lie algebra $A$  is nilpotent if, for some $n$, all products of $n$ elements in $A$ are zero.

Recall that a set $B$ with binary operations $+$ and $* $ is a {\em  left brace} if $(A, +)$ is an abelian group and the following version of distributivity combined with associativity holds:
  \[(a+b+a*b)* c=a* c+b* c+a* (b* c), a* (b+c)=a* b+a* c,\]
 moreover  $(B, \circ )$ is a group where we define $a\circ b=a+b+a* b$.

 See \cite{rump} for the original definition. For a shorter equivalent  definition using group theory see \cite{cjo}.
In what follows we will use the definition in terms of operation '$\circ $' ( presented in \cite{cjo}): a set $B$ with binary operations of addition $+$, and multiplication $\circ $ is a brace if $(A, +)$ is an abelian group, $(A, \circ )$ is a group and for every $a,b,c\in A$
\[a\circ (b+c)+a=a\circ b+a\circ c.\]

 In \cite{Catino}, Catino and Rizzo defined
$\mathbb F$-braces thus:
 Let $\mathbb F$ be a field, let $(B, +, \circ )$ be a brace, then $B$ is an $\mathbb F$-brace if $a*(eb)=e(a* b)$ for all $a,b\in B,$
$ e\in \mathbb F$. Here $ a*b=a \circ b -a -b$.

In \cite{rump} Rump introduced left and right nilpotent braces and radical chains $A^{i+1}=A*A^{i}$ and $A^{(i+1)}=A^{(i)}*A$,  for a left brace $A$, where  $A=A^{1}=A^{(1)}$ (the original construction of Rump is for right braces, but we give the natural translation of it to left braces here). Recall that a left brace $A$  is left nilpotent if  there is a number $n$ such that $A^{n}=0$, where inductively $A^{i}$ consists of sums of elements $a*b$ with
$a\in A, b\in A^{i-1}$. A left brace $A$ is right nilpotent   if  there is a number $n$ such that $A^{(n)}=0$, where $A^{(i)}$ consists of sums of elements $a*b$ with
$a\in A^{(i-1)}, b\in A$.
 Strongly nilpotent braces and the chain of ideals $A^{[i]}$ of a brace $A$ were defined in \cite{Engel}.
 Define $A^{[1]}=A$.  A left brace $A$ is strongly nilpotent if  there is a number $n$ such that $A^{[n]}=0$, where $A^{[i]}$ consists of sums of elements $a*b$ with
$a\in A^{[j]}, b\in A^{[i-j]}$ for all $0<j<i$.
A   brace is strongly nilpotent if and only if it is both left nilpotent and right nilpotent \cite{Engel}.

  Various other radicals in braces were subsequently introduced, in analogy with ring theory and group theory. Recall that solvable braces were introducted in \cite{djc}, and in \cite{ksv} the  connection between prime radical and  solvable braces was investigated. See also \cite{kinneart} for some further results on solvable braces.
In \cite{JespersLeandro} the radical of a brace was  introduced as the intersection of all maximal ideals in a given brace. This radical enjoys good properties, and is very useful for describing the structure of a given brace. 

\section{Passage from pre-Lie algebras to braces}\label{mi}

In \cite{AG}, A.Agrachev and R. Gamkrelidze introduced the formal group of flows constructed from a pre-Lie algebra. Notice that this group of flows combined with the same addition is a brace, and it is the same brace as obtained in Rump's correspondence. As mentioned by Rump in a private correspondence the addition in the pre-Lie algebra and in the corresponding brace is always the same. In his survey \cite{M},  D. Manchon mentions this group of flows along with explanations of their structure. To summarise  from \cite{AG}, \cite{M}, let $A$ with operations $\cdot, +$ be a pre-Lie algebra over $\mathbb F$, so
\[a\cdot (b\cdot c)-(a\cdot b)\cdot c=b\cdot (a\cdot c)-(b\cdot a)\cdot c.\]

Following the Rump correspondence \cite{Rump}, define the $\mathbb F$-brace $(A,+, \circ )$ with the same addition as in pre-Lie algebra $A$ and with the multiplication $\circ $ defined  as in the group of flows as follows. The following is based on \cite{AG}, \cite{M}, \cite{Rump}. We additionally assume that $A$ is a nilpotent pre-Lie algebra, and that $\mathbb F$ is a field of characteristic zero (or of characteristic larger than the nilpotency index of $A$). We use notation from \cite{M}.
\begin{enumerate}

\item Let $a\in A$, and let  $L_{a}:A\rightarrow A$ denote the left multiplication by $a$, so
$L_{a}(b)=a\cdot b$.
 Define $L_{c}\cdot L_{b}(a)=L_{c}(L_{b}(a))=c\cdot (b\cdot a)$.
 Define \[e^{L_{a}}(b)=b+a\cdot b+{\frac 1{2!}}a\cdot (a\cdot b)+{\frac 1{3!}}a\cdot (a\cdot (a\cdot b))+\cdots \]

\item  We can formally consider element $1$ such that $1\cdot a=a\cdot 1=a$ in our pre-Lie algebra (as in \cite{M})  and
 define \[W(a)=e^{L_{a}}(1)-1=a+{\frac 1{2!}}a\cdot a+{\frac 1{3!}}a\cdot (a\cdot a)+ \cdots \]
 Notice that $W(a):A\rightarrow A$ is a bijective function, provided that $A$ is a nilpotent pre-Lie algebra.

\item Let $\Omega (a):A\rightarrow A$ be the inverse function to the function $W(a)$, so  $\Omega (W(a))=W(\Omega (a))=a$.
  Following \cite{M} the first terms of $\Omega $ are
{\bf
\[ \Omega (a)=  a-{\frac 12}a\cdot a +{\frac 14} (a\cdot a)\cdot a +{\frac {1}{12}}a\cdot (a\cdot a) +\ldots \]
}

\item Define\[a\circ b=a+e^{L_{\Omega (a)}}(b).\]
 Here, the addition is the same as in the pre-Lie algebra $A$.
It was shown in \cite{AG}  that $(A, \circ )$ is a group. It is immediate to see that $(A, \circ , + )$ is a left brace because
 \[a\circ (b+c)+a=a+e^{L_{\Omega (a)}}(b+c)+a=(a+e^{L_{\Omega (a)}}(b))+(a+e^{L_{\Omega (a)}}(c))=a\circ b+a\circ c.\]

Notice that the above correspondence works globally provided that $A$ is a nilpotent pre-Lie algebra, so  $A^{n}=0$
 for some $n$.

\end{enumerate}

When the underlying pre-Lie algebra is a ring the obtained brace is with the  familiar  multiplication
 $a\circ b=a+b+ab$  (see \cite{M}).

$ $

{\em Remark about connections with the BCH formula. } Notice that the above formula can also be written using the Baker-Campbell-Hausdorff formula and  Lazard's correspondence, see \cite{AG}, \cite{M} for details.
  In particular, the following formula for the multiplication $\circ $ holds in the brace obtained above:
\[ W(a)\circ W(b)= W(C(a, b)),\]
 where $C(a, b)$ is obtained using the Campbel-Hausdorf series in the Lie algebra $L(A)$.  Recall that the Lie algebra $L(A)$ is obtained from a pre-Lie algebra $A$ by taking $[a,b]=a\cdot b-b\cdot a$, and has the same addition as  $A$.
 By the Baker-Campbell-Hausdorff formula the element $C(a, b)$ can be represented in the form of a series in variables $a, b$, multiplication by scalars and commutation in the Lie algebra $L(A)$, and 
\[C(a,b)=a+b++{\frac 12}[a,b]+{\frac 1{12}}([a,[a,b]]+[b,[b,a]])+\cdots .\] 
 For more details see \cite {AG}, page 1658, \cite{M}, page 3.

$ $

{\bf Example.} Let $(A, +, \cdot )$ be a pre-Lie algebra such that $A^{[4]}=0$.
 We calculate the formula for the multiplication in the corresponding brace $(A, +, \circ )$.

We know that  $\Omega (a)=  a-{\frac 12}a\cdot a  +c$ for some $ c\in A^{[3]}$  following the formula from \cite{M}.
 We obtain:
\[e^{L_{\Omega (a)}}(b)=b+\Omega (a)\cdot b+{\frac 12}\Omega (a)(\Omega (a)\cdot b)+c' ,\] where $c'\in A^{[4]}$, so $c'=0$.
 Therefore
\[a\circ b=a+b+a\cdot b-{\frac 12}(a\cdot a)\cdot b+{\frac 12} a\cdot (a\cdot b) \]
hence
\[a* b=a\cdot b-{\frac 12}(a\cdot a)\cdot b+{\frac 12} a\cdot (a\cdot b) \]

 Observe that the following result follows from the above construction.
\begin{theorem}\label{1}
 Let $(A, +, \cdot )$ be a nilpotent  pre-Lie algebra over a field $\mathbb F$ of characteristic zero, and let $(A, +, \circ )$ be the obtained brace as above.  Denote $a*b=a\circ b-a-b $.
 Then \[a* b=a\cdot b +\sum _{x\in B} \alpha _{x}x\] where $\alpha _{x}\in {\mathbb F}$  and $B$ is the set of all products of elements $a$ and $b$ from $(A, \cdot )$ with $b$ appearing only at the end, and $a$ appearing at least two times in each product.
\end{theorem}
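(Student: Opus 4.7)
The plan is to expand $a*b$ using the explicit series from Rump's construction and keep track of the combinatorial shape of each summand. Since $a\circ b = a + e^{L_{\Omega(a)}}(b)$, we immediately get
$$a*b = e^{L_{\Omega(a)}}(b) - b = \sum_{k\ge 1} \tfrac{1}{k!}\, L_{\Omega(a)}^{k}(b) = \Omega(a)\cdot b + \tfrac{1}{2!}\,\Omega(a)\cdot(\Omega(a)\cdot b) + \cdots,$$
a sum which terminates by nilpotency of $A$ (the characteristic hypothesis is used only to invert the factorials). Each summand is an iterated left multiplication by $\Omega(a)$ applied to $b$, so $b$ automatically sits at the rightmost position of the resulting non-associative word. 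The theorem therefore reduces to analyzing the shape of $\Omega(a)$ itself.

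The key lemma I would establish is that $\Omega(a)$ is a pre-Lie polynomial in the single variable $a$ with leading term $a$ and all further terms containing at least two copies of $a$; that is, $\Omega(a) = a + \sum_{y} \beta_y\, y$ with each $y$ a non-associative monomial in $a$ alone having at least two occurrences of $a$. This follows by induction on degree from the defining identity $W(\Omega(a)) = a$, rewritten as
$$\Omega(a) = a - \tfrac{1}{2!}\,\Omega(a)\cdot \Omega(a) - \tfrac{1}{3!}\,\Omega(a)\cdot(\Omega(a)\cdot \Omega(a)) - \cdots.$$
Modulo $A^{[k+1]}$ only the approximation of $\Omega(a)$ modulo $A^{[k]}$ enters the right-hand side, so the inductive hypothesis propagates, and nilpotency of $A$ ensures termination.

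Granted the lemma, the conclusion is a matter of substitution. The $k=1$ term gives $\Omega(a)\cdot b = a\cdot b + \sum_y \beta_y\, (y\cdot b)$, and each $y\cdot b$ lies in $B$ since $y$ already contains at least two copies of $a$ and $b$ now occupies the rightmost position. For $k\ge 2$ the summand $\tfrac{1}{k!}L_{\Omega(a)}^{k}(b)$, after replacing every occurrence of $\Omega(a)$ by its pre-Lie polynomial expansion, becomes an $\mathbb F$-linear combination of monomials each containing at least $k\ge 2$ copies of $a$ together with a single $b$ at the rightmost position, hence again elements of $B$. Summing all these pieces yields $a*b = a\cdot b + \sum_{x\in B}\alpha_x x$. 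The only genuine obstacle is the inductive lemma on the shape of $\Omega(a)$; the bookkeeping on the position of $b$ is automatic from the structure of iterated left multiplications, and the explicit $A^{[4]} = 0$ example given just before the theorem statement already confirms the pattern in the lowest nontrivial degree.
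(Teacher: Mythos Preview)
Your proposal is correct and follows essentially the same approach as the paper. The paper's own proof is a one-sentence sketch stating that the result follows immediately from the fact that $\Omega(a)$ is $a$ plus a linear combination of products of more than one copy of $a$, ``which can be proved by induction''; you have simply spelled out that induction via the recursion $\Omega(a)=a-\tfrac{1}{2!}\Omega(a)\cdot\Omega(a)-\cdots$ and made explicit the substitution into $e^{L_{\Omega(a)}}(b)-b$.
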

\begin{proof}
 This follows immediately from the construction of $\Omega (a)$, which is a sum of $a$ and a  linear combination of all possible products of more than one element $a$ with any distribution of brackets, which can be proved by induction.
\end{proof}

\begin{question}
What braces are obtained from the  known types of pre-Lie algebras?
\end{question}

\begin{question}
What braces are obtained from Novikov algebras?
\end{question}

$ $

\section{Some supporting lemmas}

We recall  Lemma 15 from \cite{Engel}:

\begin{lemma}\label{fajny}
 Let $s$ be a natural number and let $(A, +, \circ)$ be a left brace such that $A^{s}=0$ for some $s$.
 Let $a, b\in A$, and as usual define $a*b=a\circ b-a-b$.
Define inductively elements $d_{i}=d_{i}(a,b), d_{i}'=d_{i}'(a, b)$  as follows:
$d_{0}=a$, $d_{0}'=b$, and for $i\leq 1$ define $d_{i+1}=d_{i}+d_{i}'$ and $d_{i+1}'=d_{i}d_{i}'$.
 Then for every $c\in A$ we have
\[(a+b)*c=a*c+b*c+\sum _{i=0}^{2s} (-1)^{i+1}((d_{i}*d_{i}')*c-d_{i}*(d_{i}'*c)).\]
\end{lemma}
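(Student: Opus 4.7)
My plan is to convert the right-hand side into a telescoping alternating sum by exploiting the recursive structure of the $d_i$. The pivotal observation is that two steps of the recursion reproduce the brace product:
\[d_{i+2} = d_{i+1} + d_{i+1}' = (d_i + d_i') + d_i * d_i' = d_i \circ d_i'.\]
Applying the brace identity $(u \circ v)*c = u*c + v*c + u*(v*c)$ with $u = d_i$, $v = d_i'$, I would obtain
\[d_{i+2}*c = d_i*c + d_i'*c + d_i*(d_i'*c),\]
and therefore the associator-like quantity rewrites as
\[(d_i*d_i')*c - d_i*(d_i'*c) = d_{i+1}'*c + d_i*c + d_i'*c - d_{i+2}*c.\]

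Next I would substitute this expression into the alternating sum $\sum_{i=0}^{2s}(-1)^{i+1}((d_i*d_i')*c - d_i*(d_i'*c))$ and split it into four subsums. The $d_i'*c$ and $d_{i+1}'*c$ pieces combine with a single-index shift, so consecutive signs cancel interior terms and leave only $-d_0'*c - d_{2s+1}'*c$. The $d_i*c$ and $-d_{i+2}*c$ pieces combine with a two-index shift, leaving the boundary contribution $-d_0*c + d_1*c + d_{2s+2}*c - d_{2s+1}*c$.

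To finish I would invoke nilpotency via the easy induction $d_i' \in A^{i+1}$, which follows from $d_0' = b \in A$ and $d_{i+1}' = d_i * d_i' \in A * A^{i+1} = A^{i+2}$. Since $A^s = 0$, this forces $d_{2s+1}' = 0$; moreover, $d_i' = 0$ for every $i \geq s-1$, so the sequence $(d_j)$ stabilizes from index $s-1$ onward, whence $d_{2s+1} = d_{2s+2}$. The surviving boundary terms then reduce to $-d_0'*c - d_0*c + d_1*c = -b*c - a*c + (a+b)*c$, so the alternating sum equals $(a+b)*c - a*c - b*c$, which rearranges to the stated formula.

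The main obstacle, and really the whole content of the argument, is recognizing the identity $d_{i+2} = d_i \circ d_i'$, which is what unlocks both the brace axiom and the telescoping; after that, the bookkeeping is routine. The generous upper limit $2s$ (rather than the minimal $s-1$) is needed precisely so that both kinds of boundary terms --- those involving $d_{\,\cdot\,}'$ and those involving $d_{\,\cdot\,}$ --- vanish simultaneously by nilpotency.
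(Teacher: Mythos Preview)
The paper does not actually prove this lemma; it is quoted verbatim as ``Lemma 15 from \cite{Engel}'' and used as a black box. So there is no in-paper argument to compare against.

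Your proof is correct. The key identity $d_{i+2}=d_i\circ d_i'$ together with the left-brace axiom $(u\circ v)*c=u*c+v*c+u*(v*c)$ indeed gives
\[
(d_i*d_i')*c-d_i*(d_i'*c)=d_{i+1}'*c+d_i*c+d_i'*c-d_{i+2}*c,
\]
and your telescoping computation is accurate: the $d_i'$-terms and $d_{i+1}'$-terms cancel in pairs leaving $-d_0'*c-d_{2s+1}'*c$, while the $d_i$-terms and $d_{i+2}$-terms cancel leaving $-d_0*c+d_1*c-d_{2s+1}*c+d_{2s+2}*c$. The nilpotency argument $d_i'\in A^{i+1}$ is exactly right and kills $d_{2s+1}'$ and forces $d_{2s+2}=d_{2s+1}$, so the survivors collapse to $(a+b)*c-a*c-b*c$ as claimed. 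This is in fact the same mechanism used in the original source, so you have essentially reconstructed the intended proof.
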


{\bf Notation 1.} Let $A$ be a strongly nilpotent  brace with operations $+, \circ \*$. Let $x, y\in A$. Consider elements $x*y$, $x*(x*y)$, $\ldots $
 which are all products (with any distribution of brackets) of some non-zero number of  elements $x$ and one element $y$ at the end.
 The set of all such elements will be denoted as $E_{x,y}$. Notice that this set is finite, because $A$ is a strongly nilpotent brace. 
 We can list elements from the set $E_{x,y}$ in a such way that shorter products always appear before longer products, and then we can make it into a vector, which we will denote as $V_{x,y}$.

 We will now prove a supporting  lemma which will be useful in Section \ref{4}. In what follows, by $2^{n}c$ we denote the sum of $2^{n}$ copies of element $c\in A$.

\begin{lemma}\label{newest}
 Let $(A, +, \circ )$ be a  strongly  nilpotent  brace over the field of rational numbers. Then for every
$a, b\in A$ the limit $lim _{n\rightarrow \infty }2^{n}({\frac 1{2^{n}}} a)*b$ exists.

$ $ 
Moreover, there are  square matrices $P$, $T$, not depending on $a,b$, such that 

\[2^{n}({\frac 1{2^{n}}} a)*b=E_{1}PT^{n}P^{-1}V_{a,b},\]
 where $T$ is a matrix in the Jordan block form with the first Jordan block of dimension $1$  equal to $1$ and all other Jordan blocks with eigenvalues smaller than $1$. Moreover $E_{1}=[1, 0, 0, \ldots ,0]$ and entries of $S, T$ are rational numbers and
\[2^{n}V_{{\frac 1{2^{n}}} a, b}=PT^{n}P^{-1}V_{a,b}.\] 
\end{lemma}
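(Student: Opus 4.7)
The plan is to reduce the entire lemma to studying the powers of a single ``halving'' matrix $M_{1}$ and then to read off its Jordan structure. First I would show that for every rational $\lambda$ and every brace word $v_{i}$ appearing as an entry of $V_{a,b}$ (write $m_{i}$ for its brace-degree in the first argument),
\[
v_{i}(\lambda a, b) \;=\; \sum_{j} R_{ij}(\lambda)\, v_{j}(a, b),
\]
where $R_{ij}(\lambda) \in \mathbb{Q}[\lambda]$ depends only on the brace $A$ and on the shapes $v_{i}, v_{j}$, vanishes when $m_{j} < m_{i}$, and more precisely has the form $R_{ij}(\lambda) = \sum_{k=m_{i}}^{m_{j}} \lambda^{k}\, \gamma_{ijk}$. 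The natural route goes through Rump's correspondence recalled in Section \ref{mi}: since $A$ is a strongly nilpotent $\mathbb{Q}$-brace, it carries a nilpotent pre-Lie product $\cdot$ on the same underlying set. Iterating the relation $x * y = x \cdot y + (\text{higher pre-Lie order in } x)$, each brace word $v_{i}(a,b)$ expands as a finite sum of pre-Lie products of pre-Lie degrees $k \geq m_{i}$ in $a$; conversely, each pre-Lie product of degree $k$ re-expands as a sum of brace words of brace-degrees $\geq k$ (both series terminate by strong nilpotency). Pre-Lie products are multilinear, so substitution of $\lambda a$ multiplies a pre-Lie product of degree $k$ by $\lambda^{k}$; collecting in the brace basis then produces the announced polynomials.

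Next I would translate this into matrix form and iterate. Assembling the polynomials $R_{ij}(\lambda)$ into a single matrix $R(\lambda)$, the expansion reads $V_{\lambda a, b} = R(\lambda)\, V_{a, b}$. Since $R(\lambda)$ is intrinsic to the brace and does not depend on $a$, substituting $a \mapsto \mu a$ gives $V_{\lambda \mu a, b} = R(\lambda)\, V_{\mu a, b} = R(\lambda)\, R(\mu)\, V_{a, b}$, hence $R(\lambda \mu) = R(\lambda)\, R(\mu)$ on the span of $V_{a, b}$. Specialising $\lambda = \mu = 1/2$ and iterating yields $R(1/2^{n}) = R(1/2)^{n}$, whence
\[
2^{n} V_{a/2^{n}, b} \;=\; \bigl(2R(1/2)\bigr)^{n} V_{a, b} \;=\; M_{1}^{n}\, V_{a, b}, \qquad M_{1} := 2 R(1/2),
\]
and the problem reduces to putting the single rational matrix $M_{1}$ into Jordan form.

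To analyse $M_{1}$, order the indices of $V_{a,b}$ by increasing brace-degree $m_{i}$, noting that $m_{1} = 1$ is realised uniquely by $v_{1} = a * b$. The vanishing of $R_{ij}(\lambda)$ when $m_{j} < m_{i}$ makes $M_{1}$ block upper-triangular in this grading. On the brace-degree-$m$ stratum only the term $\lambda^{m}$ survives, so the $(i,j)$-entry with $m_{i} = m_{j} = m$ equals $2^{1-m}\, \gamma_{ij,m}$. Since at pre-Lie degree exactly $m$ the brace word $v_{i}$ coincides with the pre-Lie word of the same shape, and the brace re-expansion of that pre-Lie word returns $v_{i}$ itself with coefficient $1$ (all lower-order corrections living in strictly higher brace-degrees), one has $\gamma_{ij,m} = \delta_{ij}$ on every stratum. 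Thus each diagonal block of $M_{1}$ is $2^{1-m}$ times the identity, the eigenvalue $1$ occurs with algebraic multiplicity $1$, and every other eigenvalue lies in $\{2^{1-m} : m \geq 2\} \subset (0, 1)$. Since all eigenvalues are rational, $M_{1}$ admits a Jordan form $M_{1} = P T P^{-1}$ over $\mathbb{Q}$, with first Jordan block a single $1$ and all further Jordan blocks having eigenvalues strictly less than $1$.

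Combining the three steps gives $2^{n} V_{a/2^{n}, b} = P T^{n} P^{-1} V_{a, b}$, and applying $E_{1} = [1, 0, \ldots, 0]$ extracts the first entry $2^{n}((1/2^{n}) a) * b$; the limit as $n \to \infty$ exists because $T^{n}$ is constant on the $1$-block and contracts to $0$ on every block with eigenvalue $< 1$. The main technical obstacle is the first step: establishing the polynomial expansion rigorously with the exact degree bounds $m_{i} \leq k \leq m_{j}$. This rests on a careful degree-by-degree inversion between brace and pre-Lie expansions, exploiting that strong nilpotency makes every relevant series terminate. Everything downstream --- multiplicativity of $R$, block triangularity, scalar diagonal blocks, and Jordanization --- is then essentially formal linear algebra.
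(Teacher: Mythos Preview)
Your proposal has a circularity problem. You invoke a pre-Lie product $\cdot$ on $A$ ``through Rump's correspondence recalled in Section~\ref{mi}'' in order to expand brace words as polynomials in $\lambda$. But Section~\ref{mi} only constructs a brace \emph{from} a given pre-Lie algebra; it does not show that an arbitrary strongly nilpotent $\mathbb{Q}$-brace arises this way. That direction is precisely the content of Theorem~\ref{main} (and then Theorem~\ref{5}), and Theorem~\ref{main} relies on the present lemma. So at this point in the logical development there is no pre-Lie product available on $A$, and your first step---the expansion $v_i(\lambda a, b) = \sum_j R_{ij}(\lambda)\, v_j(a,b)$ with $R_{ij}$ polynomial, justified via ``brace word $\to$ pre-Lie word $\to$ brace word'' and multilinearity of~$\cdot$---cannot be carried out as written. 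Note also that in a $\mathbb{Q}$-brace only the \emph{right} argument of $*$ is linear over $\mathbb{Q}$; there is no a priori reason for $(\lambda a)*b$ to depend polynomially on $\lambda$, so this is not a gap you can patch by a direct brace computation either.

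The paper avoids this by working entirely inside the brace and never scaling by a general $\lambda$. It uses only Lemma~\ref{fajny}, a purely brace-theoretic identity for $(x+y)*z$, applied with $x=y$, to express each entry of $V_{2x,y}$ as a fixed linear combination of entries of $V_{x,y}$. This yields an upper-triangular matrix $M$ (independent of $x,y$) with $MV_{x,y}=V_{2x,y}$, whose diagonal entries are $2^{i}$ with $i\ge 1$ and with the eigenvalue $2$ simple. Inverting gives $2^{n}V_{2^{-n}x,y}=(2M^{-1})^{n}V_{x,y}$, and the Jordan form of $T=2M^{-1}$ (one simple eigenvalue $1$, all others of the form $2^{1-i}<1$) delivers both the matrix identity and the existence of the limit. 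Your downstream linear-algebra steps (block-triangularity, Jordan form over $\mathbb{Q}$, convergence of $T^{n}$) match the paper's, but the foundation you build them on is not yet available.
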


\begin{proof}  Let notation for $E_{x,y}$ and $V_{x,y}$ be as in Notation $1$ above.
By Lemma \ref{fajny} (applied several times) every element from the set $E_{2x, y}$ can be written as a linear combination 
   of elements from $E_{x,y}$, with coefficients which do not  depend on $x$ and $y$. We can then  organise these coefficients in a matrix, which we will call $M=\{m_{i,j}\}$,  so that we obtain
\[MV_{x,y}=V_{2x,y}.\]

Notice that  elements from $E_{x,y}$ (and from $E_{2x,y}$) which are shorter appear before elements which are longer in our vectors $V_{x,y}$ and $V_{2x, y}$. Therefore by Lemma \ref{fajny} it follows that $M$ is an  upper diagonal matrix. Observe that the first element in the  vector  
 $E_{2x,y}$ is $(2x)*y$  and that this element can be written as $2(x*y)$ plus  elements of degree larger than $2$ (by  Lemma \ref{fajny}). It follows that the first diagonal entry in $M$ equals $2$, so $m_{1, 1}=2$. 
Observe that the following diagonal entries will be equal to $4$ or more, because 
 for example $(2x)*((2x)*y)$ can be written using Lemma \ref{fajny} as $4x*(x*y)$ plus  elements of degree larger than $3$.

Therefore $M$ has exactly one eigenvalue equal to $2$ with exactly one corresponding eigenvector,  and all other eigenvalues 
 equal $2^{i}$ for some $i>1$ (because diagonal entries of $M$ are its eigenvalues).
 Notice that $M$ does not depend on $x$ and $y$, as we only used relations from Lemma \ref{fajny} to construct it.
 We can write $M=PJP^{-1}$ where $J$ consists of Jordan blocks, and the first block is the $1\times 1$  diagonal entry $J_{1,1}=2$. Notice that since the eigenvalues are real it is possible to find such matrices $P, J$ with entries from the field of rational numbers. 

It follows that for every $n$, $M^{n}V_{x,y}=V_{2^{n}x,y}$, therefore 
\[2^{n}V_{2^{-n}x,y}=(2M^{-1})^{n}V_{x,y}.\]
Notice that $2M^{-1}=PJ'P^{-1}$ where $J'=2J^{-1}$ is the matrix with Jordan blocks, and the first block is $1\times 1$ block with eigenvalue $1$. The remaining blocks have eigenvalues $2^{-i}$ for $i>0$.
  It follows that we can define the limit 
\[lim_{n\rightarrow \infty } 2^{n}V_{2^{-n}x,y}=lim_{n\rightarrow \infty }PJ'^{n}P^{-1}V_{x,y}=PE_{1,1}P^{-1},\]
 where $E_{1,1}$ is the matrix with the first entry equal to $1$ and all other entries equal to zero.
 The first entry of vector $V_{2^{-n}x,y}$ is $(2^{-n}x)*y$, therefore 
  $lim _{n\rightarrow \infty } 2^{n}(2^{-n}x)*y=p_{1,1}R_{1}V_{x,y}$, where $p_{1,1}$ is the first diagonal entry of matrix $P$, and $R_{1}$ is the first row of matrix $P^{-1}$.
\end{proof}

{\bf Notation 2.} Let $A$ be a brace with operations $+, \circ , *$ defined as usual so $x\circ y=x+y+x*y$.
  For $x, y, z\in A$  and 
let $E(x, y, z)\subseteq A$ denote the set consisting of any product of elements $x$ and $y$ and one element $z$ at the end of each product under the operation $*$,  in any order, with any distribution of brackets, each product consisting of at least 2 elements from the set $\{x,y\}$, each product having $x$ and $y$ appear at least once,  and having element $z$ at the end. Notice that $E(x,y,z )$ is  finite provided that $A$ is a strongly nilpotent brace. Let $V_{x,y,z}$ be  a vector obtained from products of elements $x, y, z$ arranged in a such way that shorter products of elements  are situated  before longer products.

$ $

\begin{lemma}\label{sund}
 Let $(A, +, \circ )$ be a  strongly  nilpotent  brace over the field of rational numbers $\mathbb Q$. Let  $a,b\in A$. Denote 
\[a\cdot b=lim _{n\rightarrow \infty }2^{n}({\frac 1{2^{n}}} a)*b.\]
 Then, for  $\alpha, \gamma  \in \mathbb Q$ and $a, b,c\in A$ we have
\[(\alpha a +\gamma b)\cdot c=\alpha (a\cdot c)+ \gamma (b\cdot c)\]
and 
\[a\cdot (\alpha b+\gamma c)=\alpha (a\cdot b)+ \gamma (a\cdot c).\]
\end{lemma}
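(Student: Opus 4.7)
The plan is to handle the two identities separately. Linearity in the second argument is immediate from the brace axioms, whereas linearity in the first argument is the substantive step and uses an analogue of the matrix-scaling argument from Lemma \ref{newest}.

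For the second identity, I combine the brace distributivity $x*(y+z)=x*y+x*z$ with the $\mathbb{F}$-brace axiom $x*(ey)=e(x*y)$, applied with $x=2^{-n}a$, to get
\[(2^{-n}a)*(\alpha b+\gamma c)=\alpha\,(2^{-n}a)*b+\gamma\,(2^{-n}a)*c.\]
Multiplying by $2^{n}$ and passing to the limit, with existence of each limit on the right justified by Lemma \ref{newest}, gives $a\cdot(\alpha b+\gamma c)=\alpha(a\cdot b)+\gamma(a\cdot c)$.

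For the first identity, I plan to first prove additivity $(a+b)\cdot c=a\cdot c+b\cdot c$ and then bootstrap to $\mathbb{Q}$-homogeneity. Applying Lemma \ref{fajny} with the substitutions $a\mapsto 2^{-n}a$, $b\mapsto 2^{-n}b$ yields
\[(2^{-n}(a+b))*c=(2^{-n}a)*c+(2^{-n}b)*c+S_{n},\]
where $S_{n}=\sum_{i=0}^{2s}(-1)^{i+1}((d_{i}*d_{i}')*c-d_{i}*(d_{i}'*c))$ with $d_{0}=2^{-n}a$, $d_{0}'=2^{-n}b$, and $d_{i+1},d_{i+1}'$ defined inductively as in Lemma \ref{fajny}. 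Multiplying by $2^{n}$ and using Lemma \ref{newest} on the first two summands, additivity reduces to showing $2^{n}S_{n}\to 0$.

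The key observation is that every summand of $S_{n}$ expands, by iterated use of Lemma \ref{fajny} and distributivity, as a rational linear combination of elements of $E(2^{-n}a,2^{-n}b,c)$ with coefficients depending only on the brace structure and the nilpotency index $s$ (not on $n$). Mimicking the proof of Lemma \ref{newest} in the three-variable setting, I would construct a universal square matrix $M'$ satisfying $M'V_{x,y,z}=V_{2x,2y,z}$; its diagonal entries have the form $2^{k+l}$ with $k,l\geq 1$, so every eigenvalue is at least $4$. Consequently $V_{2^{-n}a,2^{-n}b,c}=(M')^{-n}V_{a,b,c}$ has entries of order $O(4^{-n})$, so $S_{n}=O(4^{-n})$ and $2^{n}S_{n}=O(2^{-n})\to 0$. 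Given additivity, $\mathbb{Q}$-homogeneity $(\alpha a)\cdot c=\alpha(a\cdot c)$ follows by the standard argument: $(na)\cdot c=n(a\cdot c)$ for $n\in\mathbb{Z}$ via iterated additivity (and $0\cdot c=0$ for the negation step), and for $\alpha=p/q$ one computes $q((\alpha a)\cdot c)=(pa)\cdot c=p(a\cdot c)$ and divides by $q$ in the $\mathbb{Q}$-vector space $A$. The main obstacle is the three-argument matrix analysis: one must carefully verify that all diagonal entries of $M'$ are indeed at least $4$ and that its Jordan decomposition proceeds over $\mathbb{Q}$, paralleling the treatment of $M$ in Lemma \ref{newest}, since this is what drives the $O(4^{-n})$ decay rate needed for the cancellation.
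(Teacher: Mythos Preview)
Your proposal is correct and follows essentially the same approach as the paper: linearity in the second variable comes directly from the $\mathbb{Q}$-brace axioms, and for the first variable you reduce to additivity via Lemma~\ref{fajny} and kill the remainder $2^{n}S_{n}$ by the three-variable analogue of the matrix argument in Lemma~\ref{newest}, exactly as the paper does with its vector $V_{2^{-n}a,2^{-n}b,c}$ and an upper-triangular matrix whose diagonal entries are $2^{-i}$ with $i>1$. Your bootstrapping from additivity to $\mathbb{Q}$-homogeneity is also the same as the paper's, only spelled out more explicitly.
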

\begin{proof} 
By the definition of a left $\mathbb Q$-brace we immediately get that  $(a\cdot (\alpha b+ \gamma c))=\alpha (a\cdot b)+ \gamma (a\cdot c)$. 

 It remains to show that  for $a, b,c\in A$ we have $a\cdot (b+c)=a\cdot b+a\cdot c$  because the base field is $\mathbb Q$. Indeed,  then for $p,q\in \mathbb Z, q\neq 0$ we have 
$ ({\frac pq} a)\cdot b=p((({\frac 1p})a\cdot) b)={\frac pq}a\cdot b$.

 Observe also that by Lemma \ref{fajny} $(a+b)\cdot c=lim _{n\rightarrow \infty }
2^{n}  ({\frac 1{2^{n}}} a+{\frac 1{2^{n}}}b)* c=
lim _{n\rightarrow \infty }{{2^{n}}}({\frac 1{2^{n}}} a)* c+2^{n}({\frac 1{2^{n}}}b)*c+2^{n}C(n)$ 
 where $C(n)$ is a sum of some products of elements ${\frac 1{2^{n}}}a$ and ${\frac 1{2^{n}}}b$ and an element $c$ at the end
(because $A$ is a strongly nilpotent brace). Moreover, each product has degree at least $3$.

We need to show that $lim _{n\rightarrow \infty }2^{n}C(n)=0$.
  We may consider a vector $V_{2^{-n}a, 2^{-n}b, c}$ obtained as in Notation $2$ from products of elements $2^{-n}a$, $2^{-n}b, c$. 

 Using similar methods as in the proof of Lemma \ref{newest} we can show that for an appropriate upper-triangular matrix $M$
 with diagonal entries smaller than $\frac 12$ (equal to $2^{-i}$ for some $i>1$) we have 
$V_{{\frac 1{2^{n}}}a, {\frac 1{2^{n}}}b, c}=M^{n}V_{a,b, c}$, hence the limit $2^{n}V_{2^{-n}a, 2^{-n}b,c}$ exists and is equal zero, which implies that 
$lim _{n\rightarrow \infty }2^{n}C(n)=0$.
\end{proof} 

$ $

\section{Passage from braces to pre-Lie algebras}\label{4}

 We now explain how to obtain a pre-Lie algebra from a brace.  It is the same pre-Lie algebra as in Rump's correspondence,  but we developed an algebraic method to obtain this algebra  instead of geometric methods used by Rump.
 In a brace $(A, +, \circ)$ we will denote as usual  $a*b=a\circ b-a-b$.

\begin{theorem}\label{main}
 Let $(A, +, \circ )$ be a  strongly  nilpotent  brace over the field of rational numbers.  
For $a,b\in A$ define \[a\cdot b=lim _{n\rightarrow \infty }2^{n}({\frac 1{2^{n}}} a)*b.\]
 Then $(A, +, \cdot )$ is a pre-Lie algebra. 
\end{theorem}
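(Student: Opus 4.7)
The plan is to derive the pre-Lie identity $(a \cdot b) \cdot z - a \cdot (b \cdot z) = (b \cdot a) \cdot z - b \cdot (a \cdot z)$ as a limit of an identity already valid in the brace. Bilinearity of $\cdot$ is supplied by Lemmas \ref{sund} and \ref{newest}, so only the pre-Lie axiom itself requires proof.

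Starting from the identity $(x \circ y)*z = x*(y*z) + x*z + y*z$, which follows from the associativity of $\circ$, I would swap $x$ and $y$ and subtract to eliminate $x*z + y*z$ and obtain
\[(x \circ y)*z - (y \circ x)*z = x*(y*z) - y*(x*z).\]
Because $(x \circ y) - (y \circ x) = x*y - y*x$, Lemma \ref{fajny} expresses the left-hand side as $(x*y - y*x)*z$ plus correction terms of strictly higher total degree in $x, y$. I would then substitute $x = a/2^n$ and $y = b/2^n$, multiply the identity through by $2^{2n}$, and pass to the limit $n \to \infty$.

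On the left-hand side, right-linearity of $*$ gives $(a/2^n)*(b/2^n) = W_n / 2^{2n}$ with $W_n = 2^n (a/2^n)*b \to a \cdot b$ by Lemma \ref{newest}; a second application of the Jordan block analysis from that lemma yields $2^{2n}\bigl((a/2^n)*(b/2^n)\bigr)*z \to (a \cdot b) \cdot z$, and symmetrically for the $b*a$ term. On the right-hand side, the sharper estimate $(b/2^n)*z = (b \cdot z)/2^n + O(1/2^{2n})$ that follows from the spectral gap in Lemma \ref{newest}, combined with additivity of $*$ in its second argument, gives $2^{2n}(a/2^n)*((b/2^n)*z) \to a \cdot (b \cdot z)$, and the analogous statement with $a, b$ swapped. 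Assembling these four limits reproduces the pre-Lie identity.

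The main obstacle will be uniform control of the error terms. The two pieces $(b/2^n) \circ (a/2^n)$ and $(a/2^n)*(b/2^n) - (b/2^n)*(a/2^n)$ feeding into Lemma \ref{fajny} are of order $1/2^n$ and $1/2^{2n}$ respectively, so every $d_i, d_i'$ produced by the recursion --- and hence every correction summand --- is a $*$-product of order at most $1/2^{3n}$; multiplying by $2^{2n}$ still lets these vanish. An analogous uniform estimate is needed to apply the ``scale and take limit'' recipe to $c_n/2^{2n}$ when $c_n$ itself depends on $n$; this is justified by the fact that the matrix $P$ of Lemma \ref{newest} depends only on the ambient brace and not on the element being scaled, so the convergence is uniform in the input. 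These order bookkeeping estimates are where the real technical work lies.
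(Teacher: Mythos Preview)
Your argument is correct in outline but differs from the paper's in two ways worth noting.

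First, the starting identity. The paper does \emph{not} go through $(x\circ y)*z=x*z+y*z+x*(y*z)$; instead it exploits commutativity of $+$ directly. Expanding $(x+y)*z$ via Lemma~\ref{fajny} gives
\[
(x+y)*z=x*z+y*z+x*(y*z)-(x*y)*z+d(x,y,z),
\]
and the same with $x,y$ swapped; since $(x+y)*z=(y+x)*z$, subtracting the two expansions already yields
\[
x*(y*z)-(x*y)*z-y*(x*z)+(y*x)*z=d(y,x,z)-d(x,y,z),
\]
with all four associator terms appearing explicitly and only the higher-order tails $d$ on the right. Your route through $(x\circ y)*z-(y\circ x)*z$ reaches the same place but needs an extra application of Lemma~\ref{fajny} to split $(x*y-y*x)*z$ and to control the corrections coming from $d_0=y\circ x$, $d_0'=x*y-y*x$.

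Second, and more substantively, the scaling. You set $x=a/2^{n}$, $y=b/2^{n}$ with a \emph{single} parameter; the paper uses two independent parameters, $x=a/2^{n}$ and $y=b/2^{m}$, and takes the iterated limit $\lim_{m}\lim_{n}2^{m+n}(\cdots)$. The payoff is that each of the four terms can be handled by two successive applications of the \emph{definition} of $\cdot$ together with the bilinearity from Lemma~\ref{sund}: one takes the inner limit in whichever variable sits in the favourable position first, obtaining a genuine $\cdot$, and then the outer limit. In your single-parameter scheme you must instead argue that $2^{2n}\bigl((a/2^{n})*(b/2^{n})\bigr)*z\to(a\cdot b)\cdot z$ while the element $W_n=2^{2n}(a/2^{n})*(b/2^{n})$ is itself moving; since $W_n$ sits in the left argument of $*$, which is not linear, this is exactly the delicate point. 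Your appeal to ``uniformity of $P$'' is the right instinct, but what is really needed is that $V_{W_n,z}\to V_{a\cdot b,\,z}$ entrywise, and the entries involve $W_n$ on the left of $*$; this can be pushed through (e.g.\ by one more use of Lemma~\ref{fajny} to compare $(W_n/2^{2n})*z$ with $((a\cdot b)/2^{2n})*z$, the difference having leading part $((W_n-a\cdot b)/2^{2n})*z$ which is small after multiplying by $2^{2n}$), but the paper's two-parameter device sidesteps this bookkeeping entirely.
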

\begin{proof}
 Observe first that that $lim _{n\rightarrow \infty }2^{n}({\frac 1{2^{n}}} a)*b$ exists by Lemma \ref{newest}.

 We will now show that for every $a,b,c\in A$ we have
\[a\cdot (b\cdot c)-(a\cdot b)\cdot c=b\cdot (a\cdot c)-(b\cdot a)\cdot c.\]

  By Lemma \ref{fajny} we get 
\[(x+y)*z=x*z+y*z +x*(y*z)-(x*y)*z +d(x,y,z),\]
\[ (y+x)*z=x*z+y*z+y*(x*z)-(y*z)*z +d(y,x, z),\]
where $d(x, y, z)=E^{T}V_{x, y, z}$ for some vector $E$ which does not depend of $x, y, z$, and where $V_{x, y, z}$ is as in Notation $2$ (moreover $d(x, y, z)$ is a combination of elements with at least 3 occurences of elements from the set $\{x, y\}$). 
 It follows that 
\[x*(y*z)-(x*y)*z-y*(x*z)+(y*x)*z=d(y, x, z)-d(x, y, z).\]

Let $a,b,c\in A$ and let $m, n$ be natural numbers.
  Applying it to $x={\frac 1{2^{n}}}a$, $y={\frac 1{2^{m}}}b$, $z=c$ we get 
\[({\frac 1{2^{n}}}a)*(({\frac 1{2^{m}}}b)*c)-(({\frac 1{2^{n}}}a)*({\frac 1{2^{m}}}b))*c+d({\frac 1{2^{n}}}a,{\frac 1{2^{m}}}b,c)=\]
\[ =({\frac 1{2^{m}}}b)*(({\frac 1{2^{n}}}a)*c)-(({\frac 1{2^{m}}}b)*({\frac 1{2^{n}}}a))*c+d({\frac 1{2^{m}}}b,{\frac 1{2^{n}}}a, c).\]

 By using the formula from Lemma \ref{newest} we obtain 

 \[lim_{m\rightarrow \infty }lim _{n\rightarrow \infty }2^{m+n}[({\frac 1{2^{n}}}a)*(({\frac 1{2^{m}}}b)*c)-(({\frac 1{2^{n}}}a)*({\frac 1{2^{m}}}b))*c+d({\frac 1{2^{n}}}a,{\frac 1{2^{m}}}b,c)]=\]
\[=a\cdot (b\cdot c)-(a\cdot b)\cdot c\]
and 
 \[lim_{m\rightarrow \infty }lim _{n\rightarrow \infty }2^{m+n}[({\frac 1{2^{m}}}b)*(({\frac 1{2^{n}}}a)*c)-(({\frac 1{2^{m}}}b)*({\frac 1{2^{n}}}a))*c+d({\frac 1{2^{m}}},{\frac 1{2^{n}}}a, c)]=\]
\[b\cdot (a\cdot c)-(b\cdot a)\cdot c.\]

 Consequently 
\[a\cdot (b\cdot c)-(a\cdot b)\cdot c=b\cdot (a\cdot c)-(b\cdot a)\cdot c.\]
%It is also possible to  apply Moore-Osgood theorem to interchange limits to get another proof.

 It remains to show that for $\alpha \in \mathbb Q$ we have
$(\alpha a)\cdot b=\alpha (a\cdot b)$ and $a\cdot (\alpha b)=\alpha (a\cdot b)$. It follows from Lemma \ref{sund}.

\end{proof}

\section{The correspondence is one-to-one}\label{fasola}

$ $

 In this chapter we show that the correspondence between strongly nilpotent $\mathbb F$-braces and pre-Lie algebras over $\mathbb F$ is one-to-one for $\mathbb F=\mathbb Q$.  Recall that $\mathbb N$ denotes the set of natural numbers. We start with the following.

\begin{proposition} \label{lim}
 Let $(A, +, \cdot )$ be a nilpotent  pre-Lie algebra over a field $\mathbb F$ of characteristic zero, and let $(A, +, \circ )$ be the  brace obtained as in Section \ref{mi}, so $(A, \circ )$ is the formal group of flows of the pre-Lie algebra $A$. Suppose that  $\mathbb  F=\mathbb R$ the field of real numbers (or the field of rational numbers).
 Then for every $a\in A$ there exists  limit 
\[ lim_{n\rightarrow \infty}2^{n} ({\frac 1{2^{n}}}a)*b.\]
 Moreover
\[ a\cdot b=lim_{n\rightarrow \infty}2^{n} ({\frac 1{2^{n}}}a)*b,\]
 where $n\in \mathbb N$.
\end{proposition}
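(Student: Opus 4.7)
The plan is to invoke Theorem~\ref{1}, which already provides an explicit algebraic description of the operation $*$ in the brace in terms of the pre-Lie product $\cdot$. Since the brace is constructed from the pre-Lie algebra, this description is the natural tool for identifying $a\cdot b$ as the limit of the rescaled products $2^n (\tfrac{1}{2^n}a)*b$.

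First, I would recall that by Theorem~\ref{1} one has
\[a*b \;=\; a\cdot b \;+\; \sum_{x\in B} \alpha_x\, x(a,b),\]
where $B$ is a finite set (finite because $A$ is nilpotent, so only finitely many monomials in $a,b$ are nonzero), and each monomial $x(a,b)$ is built from the pre-Lie product $\cdot$, has $b$ only at the very end, and contains at least two occurrences of $a$. Because each such monomial is multilinear in its arguments and is homogeneous of some degree $k_x \geq 2$ in the variable $a$, substituting $\tfrac{1}{2^n} a$ for $a$ gives
\[\bigl(\tfrac{1}{2^n} a\bigr)*b \;=\; \tfrac{1}{2^n}\,(a\cdot b) \;+\; \sum_{x\in B} \alpha_x\, 2^{-n k_x}\, x(a,b),\]
and multiplying through by $2^n$ yields
\[2^n\bigl(\tfrac{1}{2^n}a\bigr)*b \;=\; a\cdot b \;+\; \sum_{x\in B} \alpha_x\, 2^{n(1-k_x)}\, x(a,b).\]

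Finally, since $B$ is finite and $1-k_x \leq -1$ for every $x\in B$, each factor $2^{n(1-k_x)}$ tends to zero as $n\to\infty$, so the finite sum in the right-hand side vanishes in the limit. Therefore the limit exists and equals $a\cdot b$, as required. The only nontrivial ingredient is Theorem~\ref{1} itself; once the structural fact that every correction term beyond the leading $a\cdot b$ is of degree at least two in $a$ is in hand, the rescaling $a\mapsto \tfrac{1}{2^n}a$ together with the compensating factor $2^n$ does the rest. The main obstacle, therefore, is not in the limit computation itself but in the a priori guarantee that the expansion of $a*b$ truly has no linear-in-$a$ correction to $a\cdot b$, which is exactly the content already secured by Theorem~\ref{1}.
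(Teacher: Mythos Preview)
Your proof is correct and follows essentially the same approach as the paper: the paper's argument is a one-line appeal to Theorem~\ref{1} together with the bilinearity of the pre-Lie product, and you have simply unpacked that appeal by tracking the degree in $a$ of each correction term and observing that the rescaling kills everything of degree at least two.
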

\begin{proof}
 It follows immediately from the fact that the multiplication in a pre-Lie algebra is bilinear, and from the fact that $a* b$ can be expressed as in Theorem \ref{1}.
\end{proof}

We now obtain the `reverse' theorem to Theorem \ref{1}:

\begin{theorem}\label{5}
 Let $(A, +, \circ )$ be a brace and let $(A, +, \cdot )$ be a nilpotent  pre-Lie algebra over the field $\mathbb Q$ 
 obtained from this brace using Theorem \ref{main}, so $a\cdot b=lim_{n\rightarrow \infty}2^{n} ({\frac 1{2^{n}}}a)*b.$
 
 Then $(A, \circ)$ is the group of flows of the pre-Lie algebra $A$, and  $(A, +,  \circ )$ can be obtained as in 
 Section \ref{mi} from pre-Lie algebra $(A, +, \cdot )$.
\end{theorem}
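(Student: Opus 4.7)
The plan is to apply the construction of Section \ref{mi} to $(A,+,\cdot)$, producing a brace $(A,+,\circ')$ which by definition is the formal group of flows of the pre-Lie algebra, and then to prove $\circ=\circ'$; this yields both assertions of the theorem simultaneously. By Proposition \ref{lim} applied to $(A,+,\circ')$, the limit procedure of Theorem \ref{main} returns $(A,+,\cdot)$ from $\circ'$ as well; so both $\circ$ and $\circ'$ are strongly nilpotent $\mathbb{Q}$-brace structures on the same additive group $(A,+)$ giving rise, under the limit, to the same pre-Lie algebra $\cdot$. The theorem therefore reduces to the injectivity statement that the passage ``brace $\mapsto$ pre-Lie'' of Theorem \ref{main} is one-to-one on brace operations over a fixed $(A,+)$.

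To establish injectivity, I would derive a universal $\mathbb{Q}$-linear formula expressing $a*b$ in terms of iterated $\cdot$-products. The key tool is Lemma \ref{fajny}: iterating it along the decomposition $a=\underbrace{\frac{1}{2^n}a+\cdots+\frac{1}{2^n}a}_{2^n \text{ summands}}$ rewrites $a*b$ as $2^n(\frac{1}{2^n}a)*b$ plus correction terms, each a nested $*$-product of copies of $\frac{1}{2^n}a$ and $b$ whose coefficients depend only on the combinatorial shape of the expression, not on the particular brace. After inserting the appropriate scaling powers of $2$ in front of each nested product, the leading term tends to $a\cdot b$ by definition, while each correction converges to a specific iterated $\cdot$-product by a multi-variable analogue of the Jordan-block argument of Lemma \ref{newest}, carried out in the setting of Notation $2$ as in Lemma \ref{sund}. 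Passing to $n\to\infty$ produces a universal formula $a*b=\Phi(a,b)$ purely in iterated $\cdot$-products; applying the same derivation to $\circ'$ with identical pre-Lie input yields the same $\Phi$, whence $a*b=a*'b$ and hence $\circ=\circ'$.

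The main obstacle I anticipate is the convergence bookkeeping for the nested limits once many correction terms accumulate. I would handle this by induction on the nilpotency class, or equivalently on the filtration $A^{[k]}$: working modulo $A^{[k+1]}$ leaves only finitely many correction terms, each controlled by Lemma \ref{newest}, whose off-leading eigenvalues $2^{-i}$ with $i\ge 1$ drive higher-order corrections to zero in the limit. Once the universal formula $\Phi$ is in hand, a direct degree-by-degree comparison of $\Phi$ with the expansion of $a+e^{L_{\Omega(a)}}(b)-a-b$ (as carried out in the Example preceding Theorem \ref{1}) identifies $(A,\circ)$ with the formal group of flows and realises $(A,+,\circ)$ via the Section \ref{mi} construction, completing the proof.
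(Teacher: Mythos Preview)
Your proposal is correct and follows essentially the same strategy as the paper: construct the group-of-flows brace $(A,+,\circ')$, invoke Proposition~\ref{lim} so that both $\circ$ and $\circ'$ yield the same pre-Lie product under the limit, and then deduce $\circ=\circ'$ from a universal formula expressing $a*b$ in terms of iterated $\cdot$-products. The paper obtains that universal formula more directly than you do: rather than pushing nested limits through the correction terms of Lemma~\ref{fajny}, it simply reads off from Lemma~\ref{newest} the triangular relation $a\cdot b=a*b+\sum_{w\in E_{a,b}}\alpha_w\,w$ with universal coefficients $\alpha_w$, and then inverts this recursively (terminating by strong nilpotency) to express every $*$-product as a universal $\mathbb{Q}$-combination of $\cdot$-products --- so your anticipated ``convergence bookkeeping'' obstacle is bypassed entirely, and your final degree-by-degree comparison with $e^{L_{\Omega(a)}}(b)$ is unnecessary once $\circ=\circ'$ is known.
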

\begin{proof} Let $E_{a,b}$ be as in Notation $1$. 
 Observe that by Lemma \ref{newest} applied several times 
\[a\cdot  b=a* b +\sum_{w\in E_{a,b}}\alpha _{w}w\] where 
$\alpha _{w}\in F$ do not depend on $w$ and each $w$ is a product of at least $3$ elements from the set $\{a, b\}$. Observe that coefficients $\alpha _{w}$ do not  depend on the brace $A$, as they were constructed using the formula from Lemma \ref{fajny} which holds in every strongly nilpotent brace (as we can consider $V_{a, b}$ to be an infinite vector with almost all entries zero in $A$).
 Therefore $a*b=a\cdot b-\sum_{w\in E_{a,b}}\alpha _{w}w$, and now we can use this formula several times  to write every element from $E_{a,b}$ as a product of elements $a$ and $b$ under the operation $\cdot $.
 In this way we can recover the brace $(A,  +, \circ )$ from  the pre-Lie algebra $(A, \cdot , +)$. 

Notice that because we know that pre-Lie algebra $(A, +, \cdot )$ can be obtained as in Theorem \ref{main} from the brace which is it's group of flows (by Theorem \ref{lim})     
 we can use the same reasoning and 'recover' the group of flows using the same formula.

Therefore $(A, +, \circ)$ is the group of flows of pre-Lie algebra $A$. 
\end{proof}

By combining results from Theorems \ref{lim} and \ref{5} we get the following corollary.

\begin{corollary}
 There is one-to-one correspondence between the set of strongly nilpotent $\mathbb Q$-braces and the set  of  nilpotent pre-Lie algebras over $\mathbb Q$.
\end{corollary}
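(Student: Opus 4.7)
The plan is to assemble the bijection from the two explicit constructions already in hand. Let $\Phi$ send a nilpotent pre-Lie algebra $(A,+,\cdot)$ over $\mathbb Q$ to its formal group of flows $(A,+,\circ)$ as built in Section \ref{mi}, and let $\Psi$ send a strongly nilpotent $\mathbb Q$-brace $(A,+,\circ)$ to the pre-Lie algebra $(A,+,\cdot)$ given by $a\cdot b=\lim_{n\to\infty}2^{n}(\frac{1}{2^{n}}a)*b$ (Theorem \ref{main}). The task is then to verify that both maps land in the stated class, and that $\Psi\circ\Phi$ and $\Phi\circ\Psi$ are both the identity.

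First I would settle well-definedness. Theorem \ref{1} expresses $a*b$ in the brace $\Phi(A)$ as $a\cdot b$ plus a $\mathbb Q$-linear combination of pre-Lie monomials of degree at least three in $a,b$, so a nilpotent pre-Lie algebra of index $n$ maps to a strongly nilpotent brace of bounded index. Conversely, Theorem \ref{main} already produces a pre-Lie algebra on $A$; nilpotency is automatic, because iterating the triangular relation from Lemma \ref{newest} rewrites any pre-Lie monomial of degree $n$ as a $\mathbb Q$-linear combination of $*$-products of degree at least $n$, and these vanish when $A$ is strongly nilpotent.

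The identity $\Psi\circ\Phi=\mathrm{id}$ is precisely the content of Proposition \ref{lim}: applying the limit formula inside the group of flows of a pre-Lie algebra returns the original pre-Lie multiplication. The identity $\Phi\circ\Psi=\mathrm{id}$ is exactly Theorem \ref{5}: starting from a strongly nilpotent $\mathbb Q$-brace, forming its associated pre-Lie algebra via Theorem \ref{main} and then taking its group of flows reproduces the original brace. Combining these facts gives the bijection.

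The heart of the argument, already handled inside Theorem \ref{5}, is the mutual invertibility of the two explicit formulae relating $*$ and $\cdot$. Once one observes that the relation $a\cdot b = a*b + \sum_{w} \alpha_{w} w$ is triangular in the length of monomials and that the coefficients $\alpha_{w}$ are universal rational numbers independent of the particular brace, one can recursively solve for $a*b$ in terms of pre-Lie products and vice versa. This step, rather than the limit computation itself, is what forces the correspondence to be genuinely injective in both directions; the remainder of the corollary is bookkeeping.
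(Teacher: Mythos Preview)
Your proposal is correct and follows the same route as the paper: the corollary is deduced directly by combining Proposition~\ref{lim} (giving $\Psi\circ\Phi=\mathrm{id}$) with Theorem~\ref{5} (giving $\Phi\circ\Psi=\mathrm{id}$). You add a useful explicit check---that $\Phi$ lands in strongly nilpotent braces and $\Psi$ in nilpotent pre-Lie algebras---which the paper leaves implicit, but the architecture of the argument is identical.
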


{\bf Acknowledgments.} I am  very thankful to Wolfgang Rump for  answering questions about his construction and for useful comments. This research was supported by the EPSRC grant EP/R034826/1.


\begin{thebibliography}{99}
\bibitem{AG} A. Agrachev, R. Gamkrelidze, {\em Chronological algebras and nonstationary vector fields},
J. Sov. Math. 17 No1 (1981), 1650–1675.
\bibitem{DB} D. Bachiller, {\em Counterexample to a conjecture about braces}, J. Algebra, Journal of Algebra
Volume 453, 1 May 2016, Pages 160--176.
\bibitem{djc}  D. Bachiller, F. Ced{\'o}, E. Jespers, and J. Okni{\' n}ski, {\em Asymmetric product of left braces and simplicity; new solutions of the Yang-Baxter equation},  Communications in Contemporary MathematicsVol. 21, No. 08, 1850042 (2019). 

\bibitem{Bai} C. Bai,  Introduction to pre-Lie algebras, https://b-ok.org/book/2665901/384dc8
  Year: 2016, Language: english, Pages: 26.
\bibitem{Burde}  D. Burde, {\em Left-symmetric algebras, or pre-Lie algebras in geometry and physics}, Cent. Eur. J. Math. 4 (2006) 323--357. 
\bibitem{B2} Dietrich Burde, {\em Etale affine representations of Lie groups}, Geometry and Representation Theory of Real and p-adic groups.  Tirao J., Vogan D.A., Wolf J.A. (eds). Progress in Mathematics, vol 158, 1998. Birkh{\" a}user Boston.
\bibitem{Catino} F. Catino, R. Rizzo, {
\em  Regular subgroups of the affine group and radical circle algebras},
Bull. Aust. Math. Soc. 79 (2009), no. 1, 103--107.
\bibitem{cjo} F. Ced{\' o}, E.Jespers, J. Okni{\' n}ski, {\em Braces and the Yang-Baxter equation}, Comm. Math. Phys. 327, 
 (2014), 101--116.
\bibitem{doikou} A. Doikou, A. Smoktunowicz, {\em  From braces to Hecke algebras and quantum groups}, arXiv:1912.03091v2[math-ph] 16 Dec 2019.

\bibitem{gateva} T. Gateva-Ivanova, {\em Set-theoretic solutions of the Yang-Baxter equation, Braces and Symmetric groups}, Advances in Mathematics 338 (2018), 649--701

\bibitem{new}  E. Jespers, {\L}. Kubat, A. Van Antwerpen, L. Vendramin, {\em Factorisation of skew braces}, Math. Ann. 375 (2019), no. 3-4, 1649--1663.
\bibitem{JespersLeandro} E. Jespers, {\L}. Kubat, Arne Van Antwerpen,
L. Vendramin, {\em Radical and weight of skew braces and their applications to structure groups of solutions of the Yang-Baxter equation},
 arXiv:2001.10967 [math.RA], 2020.
\bibitem{kinneart} Patrick Kinnear, {\em The Wreath Product of Semiprime Skew Braces is Semiprime}, arXiv:1907.11290 [math.RA] 2019.
\bibitem{ksv} A. Konovalov, A.Smoktunowicz, L.Vendramin, {\em On skew braces and their ideals}, Experimental Mathematics 
DOI: 10.1080/10586458.2018.1492476 April 2018.
 
\bibitem{M} Dominique Manchon,  {\em A short survey on pre-Lie algebras}, Noncommutative Geometry and Physics: Renormalisation, Motives, Index Theory, (2011), 89--102.

\bibitem{rump} W. Rump, {\em Braces, radical rings, and the quantum Yang–Baxter equation}, Journal of Algebra
Volume 307, Issue 1, 1 January 2007, Pages 153--170.
\bibitem{Rump} W. Rump,  {\em The brace of a classical group}, Note di Matematica.
Note Mat. 34 (2014) no. 1, 115--144.
\bibitem{Agatka1} A. Smoktunowicz, A. Smoktunowicz,  {\em Set-theoretic solutions of the Yang-Baxter equation and new classes of  R-matrices},  Linear Algebra Appl.  {\bf 546} (2018), 86--114. 
\bibitem{Engel} Agata Smoktunowicz,{\em  On Engel groups, nilpotent groups, rings, braces and the Yang-Baxter equation}, Transactions of the AMS, 2018, Vol 370(9), 6535--6564.
\bibitem{SVB} A. Smoktunowicz, L. Vendramin, {\em On skew braces (with an appendix by N. Byott and L. Vendramin)},  J. Comb. Algebra, 2(1), 2018, 47--86.
\end{thebibliography}
\end{document}